\newcommand{\tri}{\;\;\makebox[0pt]{$|$}\makebox[0pt]{$\cap$}\;\;}
\theoremstyle{plain}
\newtheorem{theorem}{Theorem}[section]
\newtheorem{lemma}[theorem]{Lemma}
\newtheorem{cor}[theorem]{Corollary}
\newtheorem{prop}[theorem]{Proposition}
\newtheorem{dfn}[theorem]{{Definition}}
\theoremstyle{definition}
\newtheorem*{remo}{Remark}
\newtheorem*{remos}{Remarks}
\newtheorem*{prob}{Open Problems}
\numberwithin{equation}{section}
\newcommand {\Bz}{\mathbb{Z}}
\newcommand {\Br}{\mathbb{R}}
\newcommand {\Bq}{\mathbb{Q}}
\begin{document}

\title[Minimal Geodesic Foliation on $T^2$]
{Minimal Geodesic Foliation on $T^2$ in case of vanishing topological entropy}

\author{Eva Glasmachers and Gerhard Knieper }
\date{\today}
\address{Faculty of Mathematics,
Ruhr University Bochum, 44780 Bochum, Germany}
\email{eva.glasmachers@rub.de, gerhard.knieper@rub.de}
\subjclass[2000]{Primary 37C40, Secondary 53C22, 37C10}

\keywords{topological entropy, geodesic flows on tori}


\begin{abstract}
On a Riemannian 2-torus $(T^2,g)$  we study the geodesic flow in the case
of low complexity described by zero topological entropy.
We show that this assumption implies a nearly integrable behavior.
In our previous paper \cite{GK} we already obtained that the asymptotic
direction and therefore also the rotation number exists for all geodesics.
In this paper we show that
for all $r \in \mathbb{R} \cup \{\infty\}$ the universal cover $\Br^2$ is foliated
by minimal geodesics of rotation number $r$. For irrational $r \in \mathbb{R}$ all
geodesics are minimal, for rational $r \in \mathbb{R} \cup \{\infty\}$ all geodesics
stay in strips between neighboring minimal axes. In such a strip the minimal geodesics
are asymptotic to the neighboring minimal axes and generate two foliations.
\end{abstract}


\maketitle


\section{Introduction}

This paper continues our work \cite{GK}  on geodesic flows on the
unit tangent bundle of a two-dimensional Riemannian torus
$(T^2,g)$. We also like to mention that earlier versions of the
results obtained in \cite{GK} are already contained in the thesis
of the first author \cite{G}.

The goal of our work is to study dynamical and geometrical
implications of vanishing topological entropy. Recall that the
topological entropy of a continuous dynamical system represents
the exponential growth rate of orbit segments distinguishable with
arbitrarily fine but finite precision. It therefore describes the
total exponential orbit complexity with a single number. Due to a
theorem of A.~Katok~\cite{K80} positive topological entropy and
the existence of a horseshoe are equivalent provided the phase
space of the flow is 3-dimensional.

It turns out that zero topological entropy yields strong
restrictions on the behavior of geodesics. Important results in
this direction are due to J.~Denvir and R.~S.~MacKay~\cite{DM}.
Their work implies that contractible closed geodesics on $T^2$ do
not exist in case of vanishing topological entropy. An independent
proof was also given in \cite{G}. Using variational methods
S.~V.~Bolotin and P.~H.~Rabinowitz~\cite{BR} studied the
complexity of the geodesic flow on $T^2$ and obtained positive
topological entropy under certain conditions.

In  \cite{GK} we showed that absence of positive topological
entropy implies nearly integrable behavior. In particular, the
lifts of all geodesics on $\Br^2$ (not just the minimal ones) stay
in tubular neighborhoods of Euclidean lines. Hence, all geodesics
have an asymptotic direction and define a nontrivial continuous
constant of motion. The main tools used in our approach are
curve-shortening techniques that allow to globally control
geodesics in the presence of certain intersection patterns of
geodesic segments.

In this paper we strengthen our previous results. We show that
each asymptotic direction yields a geodesic foliation on the
Riemannian universal covering $(\mathbb{R}^2, \tilde{g})$ of $(T^2
= \Br^2 / \Bz^2 ,g)$ consisting of globally minimizing geodesics.
For irrational directions the foliation is unique and all
geodesics with irrational directions are minimal. In particular,
each geodesic $c$ on $\mathbb{R}^2$ with irrational rotation
number does not intersect its translates, i.e. $\tau (c) \cap c =
\emptyset$ for all $\tau \in \Bz^2\setminus \{(0,0)\}$. However,
for all rational directions the foliations are unique if and only
if the metric is flat. We remark that for monotone twist maps
analogous result were obtained by S.~Angenent~\cite{A-1992}.
Hence, our results extend well known relations between minimal
orbits of monotone twist-maps (Aubry-Mather theory
\cite{Aubry},\cite{Mather}) and minimal geodesics on $T^2$ which
were studied by  V.~Bangert~\cite{B1988} as well as M.~L.~Bialy
and L.~V.~Polterovich~\cite{BP}. However, our approach does not
use monotone twist maps but again relies on the curve shortening
flow.

\noindent Our main results can be summarized in the following theorem:\\

\noindent{\bf Theorem.}
{\it Let $(T^2,g)$ be a Riemannian torus with $h_{top}(g) =0$. Then for
all
$r \in \Br \cup \{\infty\}$ the torus is foliated by minimal geodesics
with rotation number $r$. If $r$ is irrational the foliation is unique.
For all $r \in \Bq \cup \{\infty\}$ the foliations are unique iff $T^2$
is flat. Moreover, each lift of a geodesic on $\mathbb{R}^2$ with irrational
rotation number does not intersect their translates.}

\begin{remos}
\begin{enumerate}
\item[(a)]
We remark that each such minimal foliation on $T^2$ corresponds to
the graph of a Lagrangian torus on the unit tangent bundle $ST^2$
invariant under the geodesic flow. Therefore, a result of I.~V.~
Polterovich~\cite{Polt} implies that each irrational minimal
geodesic is dense in $T^2$. In particular, he shows that the
metric is flat provided one irrational minimal geodesic has no
focal points.
\item[(b)] Further analogies to properties of orbits of monotone
twist maps in the case of vanishing topological entropy presented
in \cite{A-int} were derived in \cite{GK} and will be summarized
in this paper in Theorem~\ref{summarize}.
\end{enumerate}
\end{remos}

\section{Topological entropy and properties of minimal geodesics}

A fundamental concept in our investigation is the topological
entropy of a continuous dynamical system. It is invariant under
topological conjugations and measures, as described in the
introduction, the exponential orbit complexity with a single
non-negative number. The precise meaning becomes apparent in the
following definition introduced by R.~E.~Bowen~\cite{BO}.

\begin{dfn}[Topological Entropy] \label{topent}
Let $(Y,d)$ be a compact metric space, $\phi^t: Y \to Y$ a continuous
flow and
$d_{T}(\cdot,\cdot)$ the dynamical metric defined by
$d_{T}(v,w): = \max_{0 \leq t \leq T}d(\phi^tv, \phi^tw)$ for all $v,w
\in Y$.
For a given $\varepsilon>0$ a subset $F\subset Y$ is called
$(\phi,\varepsilon,d_{T} )$-{\it separated}
set of $Y$, if for $x_1 \not =x_2 \in F$ we have
$d_{T}(x_1,x_2)> \varepsilon.$
The topological entropy of $\phi^t$ is defined as
$$
h_{top}(g)  = h_{top}(\phi) =\lim_{\varepsilon \to 0} \limsup_{T \to
\infty}
\left(\frac{1}{T} \log r_T(\phi, \varepsilon)  \right),
$$
where $r_T(\phi, \varepsilon)$ is the maximum of the cardinalities
of any
$(\phi,\varepsilon, d_T)$-separated set of $Y$.
\end{dfn}
For more details and properties of the topological entropy see for
example
\cite{KH} or \cite{Wo}.
In the investigation of the dynamics of the geodesic flow on $T^2 = \Br^2 / \Bz^2$ the
notion of rotation number is of central importance.

\begin{dfn}
We say that a geodesic $c:\Br \to \Br^2$ has an asymptotic direction
if the limit
$$\delta(c) :=\lim\limits_{t\to \infty} \frac{c(t)}{\|c(t)\|} \in S^1$$
exists.
If $\delta(c) =(x,y)$ we call the projection onto $\mathbb{P}_1(\Br)$ given by
$$
 \rho(c) = \left\{ \begin{array}{cc}
 \frac{y}{x}, & \text{if} \; x \not=0\\
 \infty, & \text{otherwise}
 \end{array} \right.
 $$
the rotation number of $c$.
\end{dfn}

\begin{dfn}
A geodesic $c: \Br \to \Br^2$ on the Riemannian universal covering
$(\Br^2, \tilde g)$
of $(T^2, g)$ is called an axis if
there exists a nontrivial translation element $\tau \in  \Bz^2$ such that $\tau c(t) = c(t + l)$
for some $l \in \Br$ and all $t \in \Br$.
\end{dfn}
\begin{remo}
The projection of an axis $c: \Br \to \Br^2$ of a nontrivial translation element
$\tau \in  \Bz^2$ onto $T^2$ corresponds to a closed geodesics in the homology class given by $\tau$.
\end{remo}

For surfaces of genus strictly larger than one Morse \cite{Morse}
began in 1924 a systematic investigation of minimal geodesics,
i.e., geodesics which lift to minimal geodesics on the universal
covering. Somewhat later Hedlund \cite{Hedlund} obtained similar
results in the case of the 2-torus.

Minimal geodesics on $(\Br^2, \tilde g ) $ and their projections on
$(T^2, g)$ will play an important role in this paper. Central properties
are that two different minimal geodesics on $(\Br^2,\tilde g)$ cross at most once
and minimal geodesics have no self-intersections.

There are well known connections between minimal geodesics  on
$(\Br^2,\tilde g)$ and rotation numbers: A fundamental result of
Hedlund \cite{Hedlund} yields that for each $r \in \Br \cup
\{\infty\}$ there exists a minimal geodesic $c:\Br \to \Br^2$ with
rotation number $r$. Furthermore, there exists a constant $D>0$,
such that to each minimal geodesic $c: \Br \to \Br^2$ corresponds
a Euclidean line $l$, and to each Euclidean line $l$ corresponds a
minimal geodesic $c$ such that
$$d (l, c(t)) \leq D, \quad \text{for all $t \in \Br.$}$$
In particular, this implies the existence of the rotation number
for each minimal geodesic.

The set of minimal geodesics with a fixed irrational rotation
number is totally ordered, i.e., in this set no pair of geodesics
intersects. In the set of minimal geodesics with a fixed rational
rotation number the subset of minimal axes is totally ordered as
well. Two minimal axes $\alpha_1, \alpha_2$ with the same
asymptotic directions bounding a strip containing no further
minimal axes are called neighboring minimals. There exists a
minimal geodesic $c: \Br \to \Br^2$ of asymptotic type
$A(\alpha_1, \alpha_2)$, i.e., $d(c(t), \alpha_1(\Br)) \to 0$ for
$t \to \infty$ and $d(c(t), \alpha_2(\Br)) \to 0$ for $t \to
-\infty$. If $\alpha_1 \not= \alpha_2$, then each pair $c_1$ and
$c_2$ of minimal geodesics of asymptotic type $A(\alpha_1,
\alpha_2)$ and $ A(\alpha_2, \alpha_1)$, respectively, has a
unique intersection. Moreover, all minimal geodesics with the same
rational rotation number and the same asymptotic type are totally
ordered on $\Br^2$. For more details see \cite{B1988}.\\

In a recent paper \cite{GK} we derived under the assumption of vanishing
topological entropy strong properties for all geodesics on the universal
covering. Early versions of the results where already obtained in \cite{G}.
The main properties are summarized in the following theorem.

\begin{theorem} [see \cite{GK}] \label{summarize}
Let $g$ be a Riemannian metric on $T^2$ with vanishing topological
entropy. Then, on the Riemannian universal covering $( \Br^2, \tilde{g})$, every geodesic
$c$ is escaping, i.e., $\lim\limits_{t \to \pm \infty}\|c(t)\| =\infty$, has no
self-intersections and for every geodesic the asymptotic direction $\delta(c)$ exists
with the additional property that $\delta(c)=-\delta(c^-)$ if $c^-(t) =c(-t)$ is the
geodesic traversed in the opposite direction. Furthermore, each geodesic $c$ lies in a
strip in $\Br^2$ bounded by two parallel Euclidean lines. \\
Moreover, the asymptotic direction defines a continuous function $\delta: S\mathbb{R}^2 \to S^1$
given by $\delta(v) =\delta(c_v)$ such that
for each $x \in \mathbb{R}^2$ the restriction  $\delta_x: S_x\mathbb{R}^2\to S^1$
to the fibers of $ \mathbb{R}^2$ is surjective.
\end{theorem}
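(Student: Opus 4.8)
The plan is to reduce all assertions to a single \emph{crossing-control principle} furnished by the Birkhoff curve-shortening flow, combined with Katok's theorem that on a three-dimensional phase space positive topological entropy is equivalent to the presence of a horseshoe. The principle I would isolate first is: under a prescribed transversal intersection pattern between a geodesic arc and its $\Bz^2$-translates (or between two independent arcs), one interpolates a family of connecting curves and shortens them to geodesics, producing a horseshoe for the geodesic flow and hence $h_{top}(g)>0$. Thus, assuming $h_{top}(g)=0$, such intersection patterns are forbidden, and each statement of the theorem becomes a constraint that must hold lest complexity be created.

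Granting this, I would first rule out self-intersections. A transversal self-intersection of a lift $c$ produces a geodesic loop bounding an embedded disk in the simply connected plane $\Br^2$; translating this configuration under $\Bz^2$ manufactures arbitrarily long arcs that meet in the forbidden crossing pattern, so the curve-shortening argument yields a horseshoe. Hence no lift self-intersects, which in particular re-proves the Denvir--MacKay fact that contractible closed geodesics cannot exist.

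Next I would establish the escaping property and confinement to a strip together. If a lift did not escape, it would return infinitely often to a fixed compact region; projecting to $T^2$ and relifting, this recurrence forces infinitely many transversal crossings with a fixed reference geodesic, and the crossing principle again gives a horseshoe. So every lift escapes as $t\to\pm\infty$. For the strip, I would compare $c$ with Hedlund's minimal geodesics: by Hedlund there is a constant $D$ so that minimal geodesics track Euclidean lines to within $D$, and the crossing principle bounds how an escaping, non-self-intersecting $c$ can cross such a reference geodesic. Trapping $c$ between two minimal geodesics that lie at bounded distance from a common Euclidean direction then confines $c$ to a strip bounded by two parallel lines $\ell_\pm$.

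Once $c$ lies in such a strip, the asymptotic direction $\delta(c)$ exists and equals the direction of $\ell_\pm$, while the backward limit points oppositely along the strip, giving $\delta(c)=-\delta(c^-)$; equivalently $\delta_x(-v)=-\delta_x(v)$ on each fibre, since $c_{-v}=c_v^-$. Continuity of $\delta\colon S\Br^2\to S^1$ I would derive from local uniformity of the strip width, so that nearby initial vectors are trapped in nearby strips. Finally, surjectivity of $\delta_x\colon S_x\Br^2\to S^1$ follows formally: $\delta_x$ is a continuous self-map of the circle commuting with the antipodal map, hence lifts to $\tilde\delta_x$ with $\tilde\delta_x(\theta+\tfrac12)=\tilde\delta_x(\theta)+m+\tfrac12$ for a fixed integer $m$, so its degree $2m+1$ is odd and in particular nonzero, forcing $\delta_x$ to be onto. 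The decisive and hardest step is the crossing-control principle itself, namely converting an intersection pattern into quantitative orbit complexity via curve shortening; by contrast the escaping, strip, antipodal and surjectivity statements are comparatively formal once that engine and Hedlund's minimal geodesics are in hand, with only the local uniformity of strip width needed as extra input for continuity.
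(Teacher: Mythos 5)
Your high-level architecture coincides with the paper's: the theorem is not reproved here but quoted from \cite{GK}, and its proof there rests on exactly the kind of curve-shortening crossing lemma you postulate (the Fundamental Lemma reproduced in Section~2). One of your closing steps is also genuinely correct and clean: once continuity and the antipodal identity $\delta_x(-v)=-\delta_x(v)$ are known, your odd-degree argument forces nonzero degree and hence surjectivity of $\delta_x$ (the paper's Lemma~\ref{2} later sharpens this to degree one).

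The genuine gap is that the crossing-control principle --- which you yourself call the decisive step --- is never formulated, and in the generality in which you apply it, it is false. On the flat torus, two geodesics with different directions lift to lines that cross each other and infinitely many of each other's $\Bz^2$-translates, yet $h_{top}=0$; so ``a transversal intersection pattern between two arcs (or between an arc and translates of a reference geodesic) produces a horseshoe'' cannot be a theorem. The entire content of the Fundamental Lemma is the identification of the \emph{specific} pattern that does force entropy: two geodesic segments $c_1,c_2$ with endpoints on a minimal axis $\alpha$ and interiors disjoint from $\alpha(\Br)$, such that $c_1$ crosses $\eta\alpha$ and $c_2$ crosses $\eta^{-1}\alpha$ for some translation $\eta$ with $\eta\alpha(\Br)\cap\alpha(\Br)=\emptyset$, i.e.\ excursions from $\alpha$ reaching translates of $\alpha$ on \emph{both} sides. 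Consequently, each of your applications is precisely the nontrivial step of manufacturing this configuration, not a formal corollary: ruling out self-intersections in \cite{GK} needs the extension of the lemma to broken geodesics whose exterior angles are less than $\pi$ (glued from the two branches of the loop); the identity $\delta(c)=-\delta(c^-)$ is not automatic from escaping plus strip confinement, since a priori a non-self-intersecting escaping geodesic could be U-shaped, with both ends following the same direction of its strip, and excluding this again requires the crossing lemma; and continuity of $\delta$ via ``local uniformity of the strip width'' is asserted rather than proved. In short, the skeleton matches \cite{GK}, but the one lemma that carries the whole proof is left unstated, and its naive surrogate would imply that even flat metrics have positive entropy.
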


\begin{remo}
Theorem \ref{summarize} implies   $\delta(v) = - \delta(-v)$ and $\rho(v)=\rho(-v)$
for all $v \in ST^2$. Furthermore, $\delta$ and hence $\rho$ induces a flow invariant
continuous function on $ST^2$.
\end{remo}

The main technical ingredient used in the proof of this Theorem is the following Fundamental Lemma
derived in \cite{GK}. It will be crucial in this paper as well.

\begin{lemma}[Fundamental Lemma]
Let $g$ be a Riemannian metric on $T^2$ and $\alpha: \Br \to \Br^2$ a
minimal axis of the translation element $\tau$. Let $c_1:[0, a] \to \Br^2$ and
$c_2:[0, b] \to \Br^2$ be two geodesic segments with endpoints on $\alpha$ and
$$
c_1((0, a)) \cap \alpha(\Br)=\emptyset,  \; c_2((0, b)) \cap \alpha(\Br)=\emptyset.
$$
Assume that there exists a translation element $\eta$, with
$\eta\alpha(\Br) \cap \alpha(\Br)= \emptyset$ such that
$$
\eta \alpha(\Br) \tri c_1([0, a]) \not= \emptyset \; \; \text{and}
\; \; \eta^{-1} \alpha(\Br) \tri c_2([0, b]) \not= \emptyset.
$$
Then the metric $g$ has
positive topological entropy.
\end{lemma}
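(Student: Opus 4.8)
The plan is to produce directly an exponentially growing family of $(\phi,\varepsilon,d_T)$-separated geodesic segments and then read off $h_{top}(g)>0$ from Definition~\ref{topent}; equivalently, one may package the same construction as a horseshoe and invoke Katok's theorem~\cite{K80}. The two hypotheses single out a pair of building blocks on opposite sides of the axis: $c_1$ runs from $\alpha$ back to $\alpha$ while genuinely crossing the neighboring translate $\eta\alpha$, and $c_2$ does the same with $\eta^{-1}\alpha$ on the other side. The deck transformation $\tau$ slides everything along $\alpha$, so for every $n\in\Bz$ the translated arcs $\tau^n c_1$ and $\tau^n c_2$ are available as an ``up'' excursion and a ``down'' excursion anchored at the $n$-th station on $\alpha$.

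Given a word $w=(w_1,\dots,w_N)\in\{1,2\}^N$, I would build a broken curve that starts on $\alpha$ and successively inserts, at stations spaced far enough apart along $\alpha$, a copy $\tau^{n_i}c_{w_i}$ connected to the next by the subarc of $\alpha$ between the respective endpoints. Minimizing length in the homotopy class of this broken curve rel endpoints, or running the curve-shortening flow, yields a genuine geodesic segment $\gamma_w$ of length comparable to $N$, with a well-defined initial velocity $v_w\in ST^2$.

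The heart of the argument is to show that $\gamma_w$ still performs the prescribed excursion at each station, i.e.\ that near station $i$ it satisfies $\eta\alpha(\Br)\tri\gamma_w\neq\emptyset$ when $w_i=1$ and $\eta^{-1}\alpha(\Br)\tri\gamma_w\neq\emptyset$ when $w_i=2$. Here I would use that $\alpha$ and all its translates $\eta^k\alpha$ are minimal geodesics, that two minimal geodesics meet at most once, and that the geometric intersection number with a fixed minimal geodesic cannot increase under curve shortening; combined with the hypotheses that the open arcs $c_1((0,a))$ and $c_2((0,b))$ avoid $\alpha$ and that $\eta\alpha(\Br)\cap\alpha(\Br)=\emptyset$, this should pin the curve between the correct marker lines station by station and prevent neighboring excursions from merging or being undone. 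Crucially, because the two excursions sit on opposite sides of $\alpha$, two words differing at station $i$ force $\gamma_w$ and $\gamma_{w'}$ onto opposite sides of $\alpha$ there, hence a definite Euclidean separation $\varepsilon_0:=d(\eta\alpha,\eta^{-1}\alpha)>0$ at a time $\sim i$.

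Finally I would conclude by counting. The initial velocities $\{v_w:w\in\{1,2\}^N\}$ form a $(\phi,\varepsilon,d_{cN})$-separated set in $ST^2$ for a suitable $\varepsilon>0$ descending from $\varepsilon_0$ and a constant $c$ controlling the length of one building block, so $r_{cN}(\phi,\varepsilon)\geq 2^N$ and therefore $h_{top}(g)\geq (\log 2)/c>0$. The main obstacle is exactly the preservation statement of the previous paragraph: everything else is bookkeeping, but ensuring that shortening neither cancels a crossing nor collapses two excursions into one is where the transversality hypotheses $\tri$ and the minimality of $\alpha$ are indispensable.
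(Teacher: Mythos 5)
Your overall strategy (building blocks from translates of $c_1,c_2$ anchored along $\alpha$, a shortening process to turn word-curves into geodesics, then a separated-set count) is the same family of ideas as the actual proof: the present paper does not reprove the lemma but defers to \cite{GK}, noting only that the proof ``heavily relies on the curve shortening flow'' \cite{GO}. So you have located the right toolbox. However, your write-up has a genuine gap at exactly the step you yourself flag as the heart of the matter, and one of the two mechanisms you propose for it is demonstrably unworkable. Minimizing length in the homotopy class rel endpoints cannot retain the excursions: on the universal cover $\Br^2$, which is simply connected, the homotopy class rel endpoints carries no information at all. Equivalently, each excursion $\tau^{n_i}c_1$ crosses $\eta\alpha(\Br)$ with algebraic intersection number zero, since its endpoints lie on $\alpha(\Br)$ and $\eta\alpha(\Br)\cap\alpha(\Br)=\emptyset$; the crossings prescribed by the word are therefore invisible to every homotopical or homological invariant of the curve. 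Since $\alpha$ is minimal, the minimizer joining two anchor points on $\alpha$ is simply the subarc of $\alpha$ between them, so the minimization variant produces a geodesic that has forgotten the word entirely, and no separation estimate can follow.

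For the curve-shortening variant the difficulty is the same but subtler: the Sturmian property you invoke (the number of intersections with a fixed geodesic is non-increasing under the flow) is an upper bound, whereas what you need is a lower bound, i.e.\ persistence of the prescribed crossings in the limit. The flow destroys pairs of intersections at tangency times, and since, by the observation above, the excursions are topologically inessential, nothing you have said prevents the flow from erasing them one station at a time and converging to a geodesic hugging $\alpha$. Supplying the missing persistence mechanism is precisely the content of the proof in \cite{GK}; it is where the specific hypotheses of the lemma (endpoints on a \emph{minimal} axis, open arcs disjoint from $\alpha$, disjointness $\eta\alpha(\Br)\cap\alpha(\Br)=\emptyset$, topological transversality $\tri$, and, in the broken-geodesic generalization, exterior angles smaller than $\pi$) enter as working hypotheses of an argument rather than, as in your proposal, as labels attached to a picture. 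Two smaller points: curve shortening for non-closed or broken curves requires a boundary condition and angle control, which is exactly why the paper records the exterior-angle condition; and your appeal to Katok's theorem \cite{K80} runs in the wrong direction --- it deduces a horseshoe from positive entropy, while to get entropy from a shift factor you only need monotonicity of entropy under semiconjugacy, so that citation should be dropped.
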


The proof of this lemma heavily relies on the curve shortening flow,
see \cite{GO}.
As shown in the proof of Theorem~II in
\cite{GK} the Fundamental Lemma generalizes also to broken geodesic segments $c_1,c_2$
such that the exterior angles in the singularities of $c_1,c_2$ are smaller than $\pi$.
The exterior angles of $c_1$  are the interior angles in the unbounded connected component of
$H_1 \setminus c_1([0,a])$ where $H_1$ is the halfplane in $\Br^2\setminus \alpha(\Br)$
containing $c_1(0,a)$.

\section{Structure of the geodesics in case of zero topological entropy}

\begin{lemma} \label{1}
Let $(T^2,g)$ be a Riemannian torus with vanishing topological entropy
and $(\Br^2, \tilde{g})$ the Riemannian universal cover with the lifted metric $\tilde{g}$.
Then any pair of geodesics $c_1, c_2 : \Br \to \Br^2$
with $\rho(c_1)
\not = \rho(c_2)$ has at most one intersection.
\end{lemma}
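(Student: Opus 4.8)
The plan is to argue by contradiction: assuming a pair $c_1,c_2$ with $\rho(c_1)\neq\rho(c_2)$ meets in at least two points, I will manufacture the crossing configuration of the Fundamental Lemma (in its broken-geodesic form) and conclude $h_{top}(g)>0$. First I would localize the intersections. By Theorem~\ref{summarize} both geodesics are escaping and lie in strips around Euclidean lines $L_1,L_2$ of slopes $\rho(c_1)$ and $\rho(c_2)$; since these slopes differ the lines are not parallel, so the two strips are disjoint outside a bounded set and $c_1(\Br)\cap c_2(\Br)$ is compact. As two distinct geodesics meet only transversally, this set is finite. Choosing an innermost pair $P,Q$ of intersection points, the subarcs $a_1\subset c_1$ and $a_2\subset c_2$ joining them bound an embedded disk $B$ (a bigon) whose only corners are the transversal crossings at $P$ and $Q$, each of interior angle in $(0,\pi)$.

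The key idea is to split this bigon by an auxiliary minimal axis so that its boundary supplies two excursions to opposite sides of the axis. I would pick a rational direction $p/q$ roughly perpendicular to the chord $\overline{PQ}$, hence with $p/q\neq\rho(c_1),\rho(c_2)$, and with $p^2+q^2$ so large that neighboring $\Bz^2$-translates of a minimal axis of rotation number $p/q$ lie within arbitrarily small transverse distance of one another (which one expects from Hedlund's comparison of minimal axes with Euclidean lines). Among the translates of such a minimal axis I choose $\alpha$ passing through the interior of $B$ along an innermost chord that runs from $a_1$ to $a_2$ and therefore separates $P$ from $Q$; this is possible since the translates sweep across all levels of $B$ and, for a direction transverse to $\overline{PQ}$, the corners $P,Q$ sit at distinct levels. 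The two boundary arcs cut off by this chord, namely $\mu_1$ through $P$ and $\mu_2$ through $Q$, are then broken geodesic segments with endpoints on $\alpha$, interiors off $\alpha$, and — because $\alpha$ separates the corners — lying on opposite sides of $\alpha$.

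It remains to feed this into the Fundamental Lemma. Since $P$ lies deep on one side of $\alpha$ and $Q$ deep on the other, while the translates of $\alpha$ are arbitrarily finely spaced, I would select a translation $\eta$ with $\eta\alpha(\Br)\cap\alpha(\Br)=\emptyset$ (disjointness being automatic, as distinct minimal axes of a fixed rational rotation number are totally ordered) such that $\eta\alpha(\Br)\tri\mu_1\neq\emptyset$ and $\eta^{-1}\alpha(\Br)\tri\mu_2\neq\emptyset$. The segments $\mu_1,\mu_2$ are geodesic except at the single corners $P,Q$, where the transversal bigon angle forces exterior angle smaller than $\pi$, so the broken-geodesic version of the Fundamental Lemma applies and yields $h_{top}(g)>0$, contradicting the hypothesis; hence $c_1,c_2$ meet at most once. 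The main obstacle is precisely the quantitative matching in this last step: controlling the fixed size of the bigon against the spacing (and the $D$-neighborhood distortion) of the auxiliary axis family, so that the two opposite excursions genuinely reach the matched translates $\eta\alpha$ and $\eta^{-1}\alpha$ for a \emph{common} $\eta$. By contrast, verifying the exterior-angle condition at $P$ and $Q$ is routine.
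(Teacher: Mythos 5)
Your overall strategy --- contradiction plus the broken-geodesic form of the Fundamental Lemma, with the exterior-angle condition supplied by transversality --- is the same as the paper's, but the configuration you feed into it has a genuine gap, and it is exactly the one you flag at the end. The step that fails is the claim that taking $p^2+q^2$ large makes neighboring $\Bz^2$-translates of a minimal axis of rotation number $p/q$ finely spaced. Hedlund's theorem only places each minimal axis within a \emph{universal} constant $D$ of a Euclidean line: the associated lines are indeed spaced $1/\sqrt{p^2+q^2}$ apart, but the axes themselves may deviate from these lines by up to $D$, and $D$ does not shrink as $p^2+q^2$ grows; in the non-flat case the strips between neighboring axes of a fixed direction contain no further axes of that direction and their width is controlled only by $D$. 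Hence the translates of $\alpha$ need not ``sweep across all levels of $B$'': since $\mu_1,\mu_2$ lie inside the fixed compact bigon $B$, your construction requires three distinct parallel minimal axes $\eta^{-1}\alpha,\alpha,\eta\alpha$ to cross $B$ in a nested way, while nothing guarantees that even one minimal axis of any direction meets $B$ at all --- $B$ may have diameter far smaller than $D$ (bigons typically arise from focusing/conjugate points, in regions that minimal geodesics may avoid entirely). So the ``quantitative matching'' is not a technicality to be checked; it is a fatal obstruction to any argument confined to the bigon.

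The paper avoids this by never restricting to a compact piece of the geodesics. Using $\rho(c_1)\neq\rho(c_2)$ together with $\delta(c^-)=-\delta(c)$ from Theorem~\ref{summarize}, it first shows that two intersections force a third one, occurring at negative parameters on both curves, and then glues whole rays at a common intersection point, forming the $V$-shaped broken geodesics $c_2((-\infty,0])\cup c_1([0,\infty))$ and $c_1((-\infty,0])\cup c_2([0,\infty))$. The two ends of the first $V$ escape to infinity in the directions $\delta(c_1)$ and $-\delta(c_2)$, those of the second in $-\delta(c_1)$ and $\delta(c_2)$; choosing the direction of the minimal axis $\alpha$ so that it strictly separates these two pairs on $S^1$, the two $V$'s make \emph{unbounded} excursions to opposite sides of $\alpha$. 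Suitable subsegments with endpoints on $\alpha$ therefore cross translates $\eta\alpha(\Br)$ and $\eta^{-1}\alpha(\Br)$ topologically transversally for an appropriate $\eta$, automatically and with no spacing estimate on the axis family. If you want to repair your write-up, replace the bigon arcs $\mu_1,\mu_2$ by these ray-based excursions (which also requires establishing the third intersection point); the remainder of your argument --- transversal corners, exterior angles smaller than $\pi$, and the application of the broken Fundamental Lemma --- then goes through as you describe.
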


\begin{proof}
According to Theorem \ref{summarize} no geodesic on the
universal covering  has self-intersections provided the topological entropy is zero.
Assume that there exists a pair $c_1, c_2: \Br \to \Br^2$ of geodesics with two intersections
and such that $\rho(c_1) \not = \rho(c_2)$. After reparameterization and change of orientation
of the geodesics we may assume: $c_1(0)=c_2(0)$ and  $c_1(t_1)=c_2(t_2)$ for $t_1,t_2 >0$.
Since $\rho(c_2) \not = \rho(c_1)$
 and $\delta(c_2) = - \delta(c_2^{-})$, there also exist times $t_3,t_4 <0$ such
that $c_1(t_3) =c_2(t_4)$. Consider the piecewise geodesic curves
we get by gluing $c_1([0, \infty))$ with $c_2((-\infty, 0])$ and
analogously gluing $c_2([0, \infty))$ with $c_1((-\infty, 0])$.

\begin{figure}[h!]
\begin{center}
\psfrag{a}{$c_1$}
\psfrag{b}{$c_2$}
\psfrag{v}[c]{\small $c_1(- \infty,0]$}
\psfrag{w}{\small $c_2[0, \infty)$}
\psfrag{x}[c]{\small $c_2(- \infty, 0]$}
\psfrag{y}{\small $c_1[0, \infty)$}
\psfrag{e}{$\alpha$}
\psfrag{f}{$\eta\alpha$}
\psfrag{g}{$\eta^{-1} \alpha$}
\includegraphics[scale=0.25]{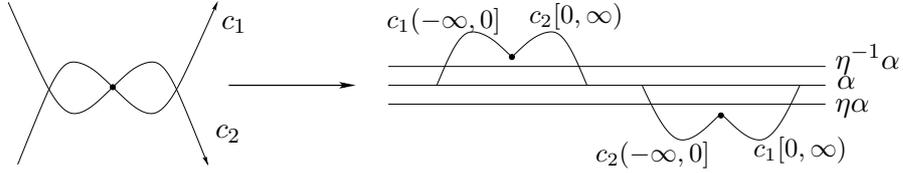}
\end{center}
\caption{$V$-shaped broken geodesic segments in the proof of Lemma~\ref{1}.}
\end{figure}
There exists a minimal axis $\alpha$ and a translation
element $\eta$
such that subsegments of these $V$-shaped broken geodesic curves by
construction fulfill the assumption
of the Fundamental Lemma in \cite{GK}. Hence, we conclude positive topological
entropy in
contradiction to the assumption.
\end{proof}
For $x \in \Br^2$ we define a lift $\tilde \delta_x:\Br \to \Br$ of the asymptotic
direction $\delta_x: S_x\mathbb{R}^2\to S^1$.
Let $e_1 = (1,0)$ and $e_2 = (0,1)$ be the standard orthonormal basis in $T_x\Br^2 \cong \Br^2$.
Choose an orthonormal basis $v_1, v_2 \in T_x\Br^2$ with
respect to the metric $g_x$ and the same orientation as $e_1,e_2$. Consider the
coverings $p_1 : \Br \to  S_x\mathbb{R}^2$ and $p_2 :\Br \to S^1$ given by
$p_1(t) = \cos t v_1 + \sin t v_2$ and  $p_2(t) = \cos t e_1 + \sin t e_2$.
Since $\delta_x: S_x\mathbb{R}^2\to S^1$ is continuous there exists a lift
$\tilde \delta_x:\Br \to \Br$, unique up to a multiple of $2 \pi$, defined by
$$
p_2(\tilde \delta_x(t)) = \delta_x(p_1(t)) \enspace.
$$

\begin{lemma}\label{2}
Let $(T^2,g)$ be a Riemannian torus with zero topological entropy.
Then, for all $x \in T^2$ the lift  $\tilde \delta_x:\Br \to \Br$  of the asymptotic
direction $\delta_x: S_xT^2 \to S^1$ is a monotone function. Moreover,
$ \tilde \delta_x(t + 2 \pi) -\tilde \delta_x(t) = 2 \pi$ and therefore the degree
of  $\delta_x$ is one.
\end{lemma}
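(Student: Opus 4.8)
The plan is to reduce both assertions to a single geometric input: two complete geodesics issuing from $x$ with \emph{different} rotation numbers must remain on opposite sides of one another after their common point $x$. Monotonicity and the degree computation are then extracted from this by planar topology together with the antipodal relation $\delta_x(-v)=-\delta_x(v)$ and the surjectivity of $\delta_x$ recorded in Theorem~\ref{summarize}.

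First I would set up the basic crossing step. Fix $t_1<t_2$ with $0<t_2-t_1<\pi$ and put $v=p_1(t_1)$, $w=p_1(t_2)$, so that $v\neq\pm w$ and $w$ is obtained from $v$ by a counterclockwise rotation through an angle in $(0,\pi)$. Assume $\rho(c_v)\neq\rho(c_w)$. The complete geodesics $c_v,c_w$ meet at $x$, and by Lemma~\ref{1} this is their only intersection. Since $c_v$ is properly embedded and escaping (Theorem~\ref{summarize}), $c_v(\Br)$ separates $\Br^2$ into two components; let $H^+$ be the one lying to the left of $c_v$ with respect to the forward orientation $v$. Because $w$ points into $H^+$ and $c_w$ does not meet $c_v$ again, the forward ray $c_w([0,\infty))$ is contained in $H^+$. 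By the strip property of Theorem~\ref{summarize}, $c_v(\Br)$ stays within bounded distance of the Euclidean line through $x$ in direction $\delta_x(v)$, so the far field of $H^+$ consists exactly of the directions in the closed counterclockwise arc from $\delta_x(v)$ to $-\delta_x(v)$. A ray escaping inside $H^+$ must therefore have asymptotic direction in that arc; and since $\rho(c_w)\neq\rho(c_v)$ excludes the two endpoints $\pm\delta_x(v)$, the direction $\delta_x(w)$ lies strictly inside it. In terms of the lift this reads
$$
\tilde\delta_x(t_1)<\tilde\delta_x(t_2)<\tilde\delta_x(t_1)+\pi .
$$

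Next I would remove the hypothesis on rotation numbers and promote this to monotonicity. If $\rho(c_{p_1(t)})$ is constant on a subinterval, then $\delta_x\circ p_1$ takes values in a single antipodal pair, hence, being continuous, is constant there, so $\tilde\delta_x$ is constant on that subinterval. Writing $f=\tilde\delta_x$, it suffices to prove $f$ nondecreasing on every interval $[a,b]$ of length $<\pi$, since any interval is a finite overlapping union of such. Comparing endpoints by a trichotomy settles this: if $\rho$ differs at $a$ and $b$ the crossing step gives $f(a)<f(b)$; if $\rho$ agrees on all of $[a,b]$ then $f(a)=f(b)$; and if $\rho$ agrees at the endpoints but some interior $t^\ast$ carries a different rotation number, then $f(a)<f(t^\ast)<f(b)$. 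Applying the same trichotomy to arbitrary subintervals shows $f$ is nondecreasing. These three cases also yield the upper bound $f(b)-f(a)\le\pi$ whenever $b-a<\pi$: the only nonobvious case is the last, where $f(a)$ and $f(b)$ differ by an integer multiple of $\pi$ (both asymptotic directions lie in the same antipodal pair) while the two strict inequalities force $f(b)-f(a)<2\pi$, leaving $f(b)-f(a)=\pi$.

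Finally I would read off the degree. By continuity the bound extends to $f(t+\pi)-f(t)\le\pi$, while the antipodal relation $\delta_x(-v)=-\delta_x(v)$ from Theorem~\ref{summarize} forces $f(t+\pi)-f(t)$ to be an odd multiple of $\pi$; together with monotonicity ($f(t+\pi)\ge f(t)$) this pins it to $f(t+\pi)-f(t)=\pi$, whence
$$
\tilde\delta_x(t+2\pi)-\tilde\delta_x(t)=2\pi
$$
and $\delta_x$ has degree one. I expect the genuine obstacle to be the crossing step, specifically the claim that a ray trapped in the half-component $H^+$ must escape with asymptotic direction in the corresponding half-circle; this is where the strip structure of Theorem~\ref{summarize} and the single-intersection property of Lemma~\ref{1} do the real work, and where the orientation bookkeeping (ensuring nondecreasing rather than nonincreasing) must be carried out carefully.
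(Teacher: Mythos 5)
Your proposal is correct and takes essentially the same route as the paper: the key step in both is that for two geodesics through $x$ with distinct rotation numbers, Lemma~\ref{1} (single intersection) together with the separation and strip properties from Theorem~\ref{summarize} forces the asymptotic directions to be positively oriented like the initial vectors, giving $0<\tilde\delta_x(t_2)-\tilde\delta_x(t_1)<\pi$, after which the antipodal relation $\delta(c)=-\delta(c^-)$ pins $\tilde\delta_x(t+\pi)-\tilde\delta_x(t)=\pi$ and hence the degree. Your write-up is somewhat more detailed than the paper's (the trichotomy handling constant-rotation-number intervals and the explicit odd-multiple-of-$\pi$ argument), but the underlying ideas coincide.
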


\begin{proof}
Choose $0 \le t_1 < t_2 < \pi$.  Then $w_1 = p_1(t_1)$, $w_2 = p_1(t_2)$ are positively
oriented and $c_{w_1}(\Br)$ and $c_{w_2}(\Br)$ are contained in Euclidean strips
corresponding to the directions $\delta_x(w_1)$ and $\delta_x(w_2)$, respectively.
If $\delta_x(w_1) \not= \delta_x(w_2)$ the ray $c_{w_2}((0, \infty)$ is by
Lemma ~\ref{1} contained in a single connected component
of $\Br^2 \setminus c_{w_1}(\Br)$. This implies that
$\delta_x(w_1)= p_2(\tilde \delta_x(t_1))$, $\delta_x(w_2)=p_2(\tilde \delta_x(t_2))$
have the same orientation as $w_1, w_2$ and therefore are positively oriented as well.
Hence, $ 0 <\tilde \delta_x(t_2)-\tilde \delta_x(t_1)< \pi$. Furthermore,
$\tilde \delta_x(t +\pi)-\tilde \delta_x(t) = \pi$ for all $t$ which yields the second assertion.
\end{proof}

\noindent Lemma~\ref{2} implies the following Corollary:

\begin{cor} \label{c1}
Let $(T^2,g)$ be a Riemannian torus with zero topological entropy and $(\Br^2, \tilde{g})$
the Riemannian universal cover. Then for all $x \in
\Br^2$, $S_x \Br^2$ is a disjoint union of the closed sets
$$S_x^{r} = \{v \in S_x \Br^2 \mid \rho(c_v) =r\} \enspace,$$
where $c_v: \Br \to \Br^2$ is a geodesic with $\dot c_v(0)=v$ and $r \in \Br \cup
\{\infty\}$.
For each $r$ the set $S_x^{r}$ consists of two connected closed
antipodal components.
\end{cor}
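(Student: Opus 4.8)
The plan is to reduce everything to the structure of the fibres of the continuous map $\delta_x \colon S_x\Br^2 \to S^1$, exploiting that the rotation number is obtained from the asymptotic direction by the antipodal double cover $q \colon S^1 \to \mathbb{P}_1(\Br)$, namely $\rho(c_v) = q(\delta_x(v))$. First I would dispose of the easy assertions. Since by Theorem~\ref{summarize} every geodesic on $\Br^2$ has a well-defined asymptotic direction, each $v \in S_x\Br^2$ lies in exactly one $S_x^r$, so the sets $S_x^r$ partition $S_x\Br^2$. Moreover $q^{-1}(r) = \{p, -p\}$ is a pair of antipodal points, hence closed, so $S_x^r = \delta_x^{-1}(\{p,-p\})$ is closed by continuity of $\delta_x$.

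The heart of the matter is to show that for each $r$ the set $S_x^r$ has exactly two connected components, antipodal to one another. For this I would work with the monotone lift $\tilde\delta_x \colon \Br \to \Br$ supplied by Lemma~\ref{2}, which is continuous, monotone non-decreasing, and satisfies $\tilde\delta_x(t+\pi) = \tilde\delta_x(t)+\pi$ (shown in the course of the proof of Lemma~\ref{2}); in particular it is surjective, increasing by $\pi$ over each half-period. Writing $p_2(\theta_0)$ for a unit vector of ratio $r$, one has $v = p_1(t) \in S_x^r$ precisely when $\tilde\delta_x(t) \in \theta_0 + \pi\Bz$, so $S_x^r = p_1(E)$ with $E := \tilde\delta_x^{-1}(\theta_0 + \pi\Bz)$. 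The relation $\tilde\delta_x(t+\pi) = \tilde\delta_x(t)+\pi$ shows $E$ is invariant under $t \mapsto t+\pi$; restricted to $[0,\pi]$ the function $\tilde\delta_x$ is continuous and monotone onto an interval of length $\pi$, so it meets the coset $\theta_0 + \pi\Bz$ in a single value whose level set is one connected closed interval $E_0$. Thus $E = \bigcup_k (E_0 + k\pi)$, and applying $p_1$ (which is $2\pi$-periodic with $p_1(t+\pi) = -p_1(t)$) collapses these into the two arcs $p_1(E_0)$ and $-p_1(E_0)$.

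It then remains to check that these two arcs are genuinely disjoint and antipodal, which is where I expect the only real subtlety to lie: because Lemma~\ref{2} yields merely weak monotonicity, $E_0$ may be a nondegenerate interval (this is precisely the rational, strip-forming situation), and I must rule out that $p_1(E_0)$ meets its own antipode. This follows again from $\tilde\delta_x(t+\pi) = \tilde\delta_x(t)+\pi$: a value cannot be attained at both $t$ and $t+\pi$, so every level set of $\tilde\delta_x$, in particular $E_0$, has length strictly less than $\pi$, and an arc of length $<\pi$ is disjoint from its antipode. Finally, surjectivity of $\delta_x$ (Theorem~\ref{summarize}) guarantees $E_0 \ne \emptyset$, so each $S_x^r$ is nonempty and consists of exactly the two closed antipodal components $p_1(E_0)$ and $-p_1(E_0)$; this also re-confirms closedness through the antipodal symmetry $\delta_x(-v) = -\delta_x(v)$ recorded after Theorem~\ref{summarize}.
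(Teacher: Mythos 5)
Your proposal is correct and follows the paper's intended route: the paper states this corollary as an immediate consequence of Lemma~\ref{2} without further proof, and your argument is precisely the detailed working-out of that implication via the monotone lift $\tilde\delta_x$, the relation $\tilde\delta_x(t+\pi)=\tilde\delta_x(t)+\pi$, and the surjectivity and continuity of $\delta_x$ from Theorem~\ref{summarize}. The only cosmetic slip is the claim that $\tilde\delta_x$ restricted to $[0,\pi]$ meets the coset $\theta_0+\pi\Bz$ in a single value (false in the edge case $\tilde\delta_x(0)\in\theta_0+\pi\Bz$, where both endpoints of the image interval lie in the coset), but the decomposition $E=\bigcup_k\left(E_0+k\pi\right)$ that your argument actually uses is unaffected, since by the shift relation the level sets of all members of the coset are translates of $E_0$ by multiples of $\pi$.
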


In the sequel we will need the following theorem which summarizes important results on
minimal rays obtained by Bangert \cite{B1994}.

\begin{theorem} \label{B}
Let $(T^2,g)$ be a Riemannian torus and $(\Br^2, \tilde{g})$ the Riemannian universal cover.
\begin{enumerate}
\item
 Let $c_1: [0, \infty) \to \mathbb{R}^2 $ and $c_2: [0, \infty) \to \mathbb{R}^2 $ two minimal rays
with $c_1(0) = c_2(0)$ and $\dot{c}_1(0) \not= \pm \dot{c}_2(0) $.
Then, for each $\epsilon >0$ the
rays $c_1: [-\epsilon, \infty) \to \mathbb{R}^2 $ and  $c_2: [-\epsilon, \infty)\to \mathbb{R}^2 $
are not minimal, provided $c_1,c_2$ are asymptotic or $c_1,c_2$ have the same irrational asymptotic direction.
\item
For any minimal ray $c:[0, \infty) \to \mathbb{R}^2$ with a
rational rotation number $r$ there exists a unique pair of neighboring minimal axes
$ \alpha_1, \alpha_2$ with rotation number $r$ and the following properties:
\begin{enumerate}
\item
$c[0, \infty)$ is contained in the strip bounded by  $ \alpha_1$ and $ \alpha_2$.
\item
$c:[0, \infty) \to \mathbb{R}^2$ is asymptotic to $ \alpha_1$ or $ \alpha_2$.
\end{enumerate}
\item
For  each $x$ between a
pair of neighboring minimal axes $\alpha_1, \alpha_2$ with the same asymptotic direction there exist four
different asymptotic
minimal geodesic rays $c_1^+, c_1^-, c_2^+$ and $c_2^-$  with $c_i^{\pm}(0) = x$ for $i\in \{1,2\}$ and
$$
\lim\limits_{t \to \infty} \|c_i^{\pm}(t) - \alpha_i(\pm t+ s_k)\|=0
$$ for some constants $s_k$ with $k \in \{1,2,3,4\}$.
\end{enumerate}
\end{theorem}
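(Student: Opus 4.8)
The plan is to deduce all three parts from two classical pillars of the theory of minimal geodesics on $(\Br^2,\tilde g)$: first, the crossing lemma quoted above, namely that two distinct minimal geodesics cross at most once and a minimal geodesic has no self-intersection; and second, the existence and basic properties of Busemann functions associated to minimal rays, whose gradient lines are precisely the co-rays (asymptotic minimal rays). I would also use the structure results already recalled: for each $r\in\Br\cup\{\infty\}$ there is a minimal geodesic of rotation number $r$, every minimal geodesic stays within bounded distance $D$ of a Euclidean line, and the set of minimal axes of a fixed rational rotation number is totally ordered and cuts $\Br^2$ into strips.

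For part (1) I would argue by contradiction with a first-variation (corner-shortening) argument. Suppose both $c_1|_{[-\epsilon,\infty)}$ and $c_2|_{[-\epsilon,\infty)}$ were minimal. Since $\dot c_1(0)\neq\pm\dot c_2(0)$, the two rays form a genuine corner at $x=c_1(0)=c_2(0)$. Using that $c_1,c_2$ are asymptotic (or share the same irrational direction), I can choose large parameters $T_1,T_2$ with $c_1(T_1)$ and $c_2(T_2)$ arbitrarily close, then compare the broken path consisting of $c_2|_{[-\epsilon,0]}$, a short connecting segment, and $c_1|_{[0,T_1]}$ against $c_2|_{[-\epsilon,T_2]}$. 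The corner at $x$ together with the closeness of the far endpoints yields a definite length saving, contradicting the minimality of $c_2$ on $[-\epsilon,\infty)$. The only subtlety in the irrational case is that the rays need not converge, but they remain in a common bounded strip, which still forces recurrent close approaches sufficient to run the comparison.

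For part (2) I would first show that a minimal ray $c$ with rational rotation number $r$ is trapped in one strip of the foliation by minimal axes of rotation number $r$. Because $c$ is minimal it meets each such axis at most once, and because it stays within bounded distance $D$ of a Euclidean line of slope $r$ while the axes bound strips of fixed width, the tail $c[t_0,\infty)$ lies in the closure of a single strip bounded by a neighboring pair $\alpha_1,\alpha_2$; uniqueness of the pair follows from the total ordering of the axes. For the asymptotic statement I would pass to the cylinder $\Br^2/\langle\tau\rangle$, where $\tau$ realizes the rotation number: there the strip is compact in the transverse direction and contains no minimal axis in its interior, so the $\omega$-limit of the projected ray is a minimal closed geodesic, which can only be the projection of $\alpha_1$ or $\alpha_2$. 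Hence $c$ is asymptotic to one of them. For part (3), fixing $x$ strictly between $\alpha_1$ and $\alpha_2$, I would construct the four rays as co-rays: for each axis $\alpha_i$ and each end I take minimizing segments from $x$ to $\alpha_i(\pm n)$ and extract a convergent subsequence of initial directions; the limit is a minimal ray $c_i^{\pm}$ from $x$ asymptotic to $\alpha_i$ in the prescribed direction, with the displacement control stated in the theorem. Distinctness of the four rays follows because they accumulate on different axes and different ends, together with part (1), which prevents two of them from sharing a direction at $x$ while being co-asymptotic.

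The step I expect to be the main obstacle is part (1): it is a rigidity (non-existence) assertion rather than an existence one, and making the corner-shortening comparison rigorous requires carefully quantifying the length gained at the corner against the (possibly only recurrent, in the irrational case) closeness of the rays far out. This is exactly where the finer Busemann-function machinery of Bangert is needed, to guarantee that asymptotic co-rays cannot be prolonged past their common point while remaining minimal.
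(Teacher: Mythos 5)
Your part (1) is essentially the paper's own argument: for asymptotic rays the paper dismisses the claim as ``an easy consequence of the triangle inequality'' (your corner-cutting comparison, which works once one notes that minimality of the two original rays from $x$ forces the parameters $T_1,T_2$ of near-coincident points to satisfy $T_2-T_1\to 0$), and for the irrational case the paper, like you, simply invokes Bangert, namely Theorem~3.6 of \cite{B1994}. The divergence is in parts (2) and (3): the paper proves nothing here, it quotes Theorem~3.7 of \cite{B1994}, whereas you attempt self-contained arguments --- and those arguments have genuine gaps at exactly the points where Bangert's machinery is doing the work.

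In part (2), the $\omega$-limit argument does not close. The $\omega$-limit set of the projected ray is compact, connected and invariant, and every geodesic in it is minimal with rotation number $r$; but nothing you say excludes the possibility that it contains a heteroclinic minimal together with \emph{both} boundary circles, i.e.\ that the ray oscillates forever between neighborhoods of $\alpha_1$ and $\alpha_2$. Crude length estimates cannot rule this out: an excursion from $\alpha_1$ to $\alpha_2$ and back, spread over horizontal distance $\ell$, exceeds the direct route by roughly $w^2/\ell$ (already in the flat model, $w$ the strip width), which tends to $0$ as $\ell\to\infty$, so ever longer excursions are compatible with minimality at this level of precision. Excluding them is precisely the content of \cite{B1994}, Theorem~3.7. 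In part (3) the same issue reappears: your co-ray construction (limits of minimizing segments from $x$ to $\alpha_i(\pm n)$, which indeed stay in the strip) produces minimal rays, but the crucial claim that the ray built toward $\alpha_1$ is asymptotic to $\alpha_1$ rather than to $\alpha_2$ is asserted, not proved. Note that the Busemann/co-ray condition for $\alpha_1$ does not by itself distinguish the two axes: since $\alpha_1$ and $\alpha_2$ are parallel axes of the same translation with equal periods, the Busemann function of $\alpha_1$ also decreases at unit rate, up to $o(1)$, along $\alpha_2$, so a co-ray to $\alpha_1$ could a priori converge to $\alpha_2$. Since the distinctness of the four rays rests on each being asymptotic to its prescribed axis and end, this gap propagates through the rest of part (3). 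If you instead cite \cite{B1994}, Theorems~3.6 and 3.7 for these statements --- as you implicitly do for the irrational case of (1) --- you recover exactly the paper's proof.
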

\begin{proof}

Under the assumption that $c_1$ and $c_2$ are asymptotic, assertion (1) is an easy consequence of the triangle
inequality.
If $c_1$ and $c_2$ have the same irrational asymptotic direction assertion (1)
follows from Theorem~3.6 in \cite{B1994}.\\
The assertions (2) and (3) follow from Theorem~3.7 in \cite{B1994}.
\end{proof}

\begin{lemma} \label{3}
Let $(T^2,g)$ be a Riemannian torus with zero topological entropy and  $(\Br^2, \tilde{g})$ the
Riemannian universal cover. Let $x \in \Br^2$ be fixed.\\
Then for all $r \in \Br \cup \{\infty\}$ and each $v
\in \partial S_x^r$ any geodesic $c_w:\Br \to \Br^2$ with $w \in S_x \Br^2 \setminus \{v \}$ intersects
$c_v$ only in $x$.
In particular, $c_v:[0, \infty) \to \Br^2$ and $c_{-v}:[0, \infty) \to \Br^2$ are minimal rays.\\
If $r$ is rational there exists a pair of minimal neighboring axes $\alpha_1$, $\alpha_2$
with the following properties:
Each of the minimal rays $c_v$ corresponding to $v \in \partial S_x^r$
is asymptotic to $\alpha_1$ or $\alpha_2$, or it coincides with one of the axes.
\end{lemma}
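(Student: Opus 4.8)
The plan is to split the statement into its three assertions and reduce them, in turn, to Lemma~\ref{1} (hence to the Fundamental Lemma), to a length comparison, and to Bangert's Theorem~\ref{B}. Throughout fix $v \in \partial S_x^r$. For the first assertion take any $c_w$ with $w \in S_x\Br^2$ and $c_w \neq c_v$; note that $w = -v$ need not be considered, since $c_{-v}$ has the same image as $c_v$. If $\rho(c_w) \neq r = \rho(c_v)$ the claim is immediate: both geodesics pass through $x$, and by Lemma~\ref{1} two geodesics of different rotation number meet at most once, so $c_v$ and $c_w$ intersect only in $x$.

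The main obstacle is the case $\rho(c_w) = r$, where Lemma~\ref{1} does not apply directly. Here I would exploit that $v$ is a \emph{boundary} point of the closed set $S_x^r$: there is a sequence $v_n \to v$ with $\rho(c_{v_n}) \neq r$. Since $\rho(c_{v_n})$ differs from the common value $r = \rho(c_v) = \rho(c_w)$, Lemma~\ref{1} gives that $c_{v_n}$ meets both $c_v$ and $c_w$ only in $x$, for every $n$. Suppose now $c_v$ and $c_w$ met in a second point $y = c_v(t_0)$ with $t_0 \neq 0$. Being distinct geodesics they cannot be tangent at $y$, so they cross transversally there. As $c_{v_n} \to c_v$ in the $C^1$ sense on a compact interval around $t_0$, stability of transversal crossings forces $c_{v_n}$ to cross $c_w$ at a point near $y$ for all large $n$; this point lies near $y \neq x$, hence is different from $x$, contradicting that $c_{v_n}$ meets $c_w$ only in $x$. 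Therefore $c_v$ and $c_w$ intersect only in $x$.

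The second assertion then follows by length comparison. If $c_v|_{[0,\infty)}$ were not minimal, some segment $c_v|_{[0,T]}$ would fail to be length-minimizing, and completeness of $(\Br^2,\tilde g)$ would provide a strictly shorter geodesic $\sigma = c_u$ from $x$ to $c_v(T)$. One checks $u \neq \pm v$, so the distinct geodesics $c_u$ and $c_v$ would meet both at $x$ and at $c_v(T) \neq x$, contradicting the first assertion. Hence $c_v|_{[0,\infty)}$ and, symmetrically, $c_{-v}|_{[0,\infty)}$ are minimal rays.

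For the third assertion let $r$ be rational. By the previous steps each $c_v$, $v \in \partial S_x^r$, is a minimal ray of rotation number $r$, so Theorem~\ref{B}(2) assigns to it a unique pair of neighboring minimal axes of rotation number $r$, bounding a strip that contains $c_v[0,\infty) \ni x$ and to one of which $c_v$ is asymptotic. If $x$ lies on no minimal axis of rotation number $r$, then it lies in a single connected component of the complement of these axes, i.e. in one strip bounded by neighboring axes $\alpha_1,\alpha_2$; since distinct strips are disjoint, this forces the same pair $\alpha_1,\alpha_2$ for every such $v$, and each $c_v$ is asymptotic to $\alpha_1$ or $\alpha_2$ (matching the four rays furnished by Theorem~\ref{B}(3)). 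If instead $x$ lies on such an axis $\alpha$, then $\alpha$ is one of the neighboring axes, which is precisely the situation, permitted in the statement, in which $c_v$ may coincide with one of the axes.
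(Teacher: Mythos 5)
Your proof is correct and follows essentially the same route as the paper: the central step (the case $\rho(c_w)=r$) uses the identical device of a boundary sequence $v_n \to v$ with $\rho(c_{v_n}) \neq r$, continuous dependence on initial conditions, and Lemma~\ref{1}, while minimality of the rays and the rational case are obtained, as in the paper, from the one-intersection property and Theorem~\ref{B}. You merely make explicit some details the paper leaves implicit (transversality of the crossing, the Hopf--Rinow comparison geodesic, and the disjointness of strips in the rational case).
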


\begin{proof}
For  $r \in \Br \cup \{\infty\}$ and $x \in \Br^2$ consider $v \in \partial S_x^{r}$ and
$w \in S_x \Br^2$ with $w \not= v$.
Assume that $c_w$ intersects $c_{v}( \Br) \setminus \{x\}$. According to Lemma ~\ref{1}
this implies $\rho(w) = \rho(v)$.
Choose a sequence
$v_n \in  S_x \Br^2 \setminus S_x^{r}$ converging to $v$.
By the continuous dependence of geodesics on initial conditions
$c_{v_n}( \Br) \setminus \{x \}$ intersects $c_w$ for sufficiently  large $n$ as well.
Since $\rho(v_n) \not= \rho(w)$ this contradicts Lemma ~\ref{1}. In particular, both rays
$c_v: [0, \infty) \to \Br^2$ and $c_{-v}: [0, \infty) \to \Br^2$ are minimal.
If $r$ is rational and $v \in \partial S_x^{r}$, then the minimality of $c_v:[0, \infty) \to \Br^2$
and Theorem ~\ref{B} imply that $c_v$ is asymptotic to one of the two neighboring minimal
axes with rotation number $r$.
\end{proof}

\begin{prop}\label{4}
Let $(T^2,g)$ be a Riemannian torus with vanishing topological entropy. Then the set
$\{\partial S_x^r \mid x \in \mathbb{R}^2\}$ is invariant under the geodesic flow.
If $r$ is irrational, then each $S_x^{r}$
consists of a pair of antipodal vectors $\{ \pm v \}$.
\end{prop}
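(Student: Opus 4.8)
The plan is to prove the irrational statement first, since it simultaneously settles the irrational case of the flow invariance, and then to establish invariance for every $r$ by characterizing $\partial S_x^r$ intrinsically as the set of directions spanning \emph{two-sided} minimal geodesics. The engine throughout is Lemma~\ref{3} (boundary directions yield two-sided minimal geodesics) together with Bangert's Theorem~\ref{B}.

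For the irrational assertion I would argue by contradiction. If $r$ is irrational but $S_x^r$ is strictly larger than an antipodal pair, then by Corollary~\ref{c1} one of its two connected components is a nondegenerate closed arc with distinct endpoints $v_1\neq v_2$; since the other component is its antipode, also $v_1\neq -v_2$. As $v_1,v_2\in\partial S_x^r$, Lemma~\ref{3} shows that $c_{v_1},c_{v_2}$ and $c_{-v_1},c_{-v_2}$ are minimal rays, so $c_{v_1}$ and $c_{v_2}$ extend to two-sided minimal geodesics through $x$. Lying in the same arc, they share the irrational asymptotic direction $\delta_x(v_1)=\delta_x(v_2)$. Thus $c_{v_1},c_{v_2}$ are minimal rays from a common point with $\dot c_{v_1}(0)\neq\pm\dot c_{v_2}(0)$ and the same irrational asymptotic direction, yet each admits a minimal extension to $[-\epsilon,\infty)$. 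This contradicts Theorem~\ref{B}(1). Hence the arc degenerates and $S_x^r=\{\pm v\}$.

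For the flow invariance I would establish the characterization
$$v\in\partial S_x^r\quad\Longleftrightarrow\quad c_v:\Br\to\Br^2\text{ is a two-sided minimal geodesic with }\rho(c_v)=r.$$
The implication ``$\Rightarrow$'' is exactly Lemma~\ref{3}. For ``$\Leftarrow$'' the irrational case is immediate from the first part, so assume $r$ rational and let $\alpha_1,\alpha_2$ be the neighboring minimal axes of the strip containing $x$. Granting this characterization, invariance follows at once: for any $t_0$ put $(y,u)=\phi^{t_0}(x,v)$ with $v\in\partial S_x^r$; then $c_u=c_v(\cdot+t_0)$ is the same two-sided minimal geodesic with $\rho(c_u)=\rho(c_v)=r$, so $u\in\partial S_y^r$. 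Applying this also to $\phi^{-t_0}$ gives equality, whence $\bigcup_x\partial S_x^r$ is flow invariant.

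The main obstacle is the ``$\Leftarrow$'' direction in the rational case, namely ruling out a two-sided minimal direction interior to the arc $S_x^r$. I would settle this by a counting argument: Theorem~\ref{B}(2) forces every minimal ray from $x$ to be asymptotic to $\alpha_1$ or $\alpha_2$, and Theorem~\ref{B}(3) produces exactly four such asymptotic rays, so there are at most four two-sided minimal directions at $x$. On the other hand Corollary~\ref{c1} presents $S_x^r$ as two antipodal closed arcs with four endpoints, all two-sided minimal by Lemma~\ref{3}. Therefore the four rays of Theorem~\ref{B}(3) are precisely these four arc endpoints, leaving none in the interior, which yields the characterization. The degenerate situations---an arc collapsing to a point, or $x$ lying on an axis so that two of the rays coincide---require a short separate check, handled by the monotonicity of $\tilde\delta_x$ from Lemma~\ref{2} and the total ordering of minimal axes recalled before Theorem~\ref{summarize}.
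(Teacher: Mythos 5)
Your central inference is invalid. From Lemma~\ref{3} you conclude that $c_{v_1}$ and $c_{v_2}$ ``extend to two-sided minimal geodesics through $x$'', and later you assert that the implication ``$v\in\partial S_x^r \Rightarrow c_v$ is two-sided minimal'' is ``exactly Lemma~\ref{3}''. It is not. Lemma~\ref{3} only says that the two half-rays $c_v|_{[0,\infty)}$ and $c_{-v}|_{[0,\infty)}$ are \emph{separately} minimal; it says nothing about segments $c_v|_{[-s,t]}$ crossing $x$, and in general two minimal rays glued at a point do not form a minimal geodesic: a shortcut from $c_v(-s)$ to $c_v(t)$ need not pass through $x$, so the intersection statement of Lemma~\ref{3} yields no contradiction to its existence. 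What your application of Theorem~\ref{B}(1) requires is minimality of $c_{v_i}$ on some interval $[-\epsilon,\infty)$, and this is precisely what you do not have. Indeed, two-sided minimality of boundary directions is Theorem~\ref{5}(1) of the paper, whose proof \emph{uses} Proposition~\ref{4}; so your argument either invokes a downstream result circularly or rests on an unjustified strengthening of Lemma~\ref{3}. Since both your irrational argument and the ``$\Rightarrow$'' half of your characterization (hence your entire flow-invariance argument) stand on this claim, the proposal collapses at this point.

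The paper's proof avoids exactly this trap by flowing \emph{forward}: for $v\in\partial S_x^r$ and $t>0$, the vector $v_t=\phi^t(v)$ sits at $y=c_v(t)$, and $c_{v_t}$ restricted to $[-t,\infty)$ \emph{is} the minimal ray $c_v|_{[0,\infty)}$, so a minimal backward extension comes for free. If $v_t$ were not a boundary point, one picks $w\in\partial S_y^r$ in the same component (same asymptotic direction in the irrational case; asymptotic to the same axis, via Lemma~\ref{3} and Theorem~\ref{B}(2), in the rational case); $c_w$ is a minimal ray by Lemma~\ref{3}, and Theorem~\ref{B}(1) then contradicts the minimality of $c_{v_t}$ on $[-t,\infty)$. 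Invariance for negative times follows from the symmetry $\phi^{-t}(v)=-\phi^t(-v)$, and the antipodal-pair statement is then harvested later (Theorem~\ref{5}(2)) once two-sided minimality is available. A secondary gap: Theorem~\ref{B}(3) asserts the \emph{existence} of four asymptotic minimal rays from $x$, not that there are exactly four, so your counting argument cannot bound the number of two-sided minimal directions at $x$ by four. If you want to rescue your plan, replace ``two-sided minimal at $x$'' by ``minimal on $[-t,\infty)$ after flowing forward by $t$'', which is the paper's route.
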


\begin{proof}
Consider $v \in \partial S_x^r$ and choose $t > 0$. Since $\rho$ is flow invariant we have
$\phi^t(v) =:v_t \in S_{c_v(t)}^r$.
We claim that $v_t \in \partial S_{c_v(t)}^r$. Assume first that $r$ is irrational.
If $v_t \not \in \partial S_{c_v(t)}^r$, choose $w \in  \partial S_{c_v(t)}^r$.
But this contradicts Theorem \ref{B} since $c_{v_t}: [-t, \infty) \to \mathbb{R}^2$ is minimal.
Assume  that $r$ is rational. Then $c_v$ is forward asymptotic to a minimal axis $\alpha$.
If $v_t \not \in \partial S_{c_v(t)}^r$, there exists $w \in  \partial S_{c_v(t)}^r$ such that
$c_w$ is also asymptotic to $\alpha$. But then $c_{v_t}$ is asymptotic to $ c_w$ which
contradicts Theorem \ref{B}. Hence, we obtain $v_t=w$.\\
Since $-v \in  \partial S_x^r$ we have $\phi^t(-v) \in \partial S_{c_{-v}(t)}^r =  \partial S_{c_{v}(-t)}^r$
for all $t >0$. But this implies $\phi^{-t}(v) = -\phi^t(-v) \in \partial S_{c_{v}(-t)}^r$
and therefore the flow invariance of $\{\partial S_x^r \mid x \in \mathbb{R}^2\}$
for negative times.
\end{proof}

The next Theorem gives a complete characterization
of minimal geodesics in the case of vanishing topological entropy:

\begin{theorem} \label{5}
Let $(T^2,g)$ be a Riemannian torus with zero topological entropy and $(\Br^2, \tilde{g})$
the Riemannian universal cover. Then, for all $x \in \Br^2$ it follows:
\begin{enumerate}
\item
For all $r \in \Br \cup \{\infty\}$ the
geodesics $c_v$ with $v \in \partial S_x^r$ are minimal.
\item
For all $r \in \Br \setminus \Bq$ we have $S_x^r =\partial
S_x^r =\{\pm v\}$. Hence, for each $r \in \Br
\setminus \Bq$ there exists an up to orientation and parametrization unique geodesic
with rotation number $r$ passing through $x$.
Furthermore, this geodesic is minimal.
\item
For $r \in \Bq \cup \{\infty\}$ consider the pair of neighboring minimal axes
$\alpha_1, \alpha_2$ with rotation number $r$ bounding a strip $S$ containing $x$.
\begin{enumerate}
\item
If $\alpha_1= \alpha_2$  we have $S_x^r =\partial S_x^r =\{\pm v\}$. Hence, there exists an
up to orientation and parametrization unique geodesic with rotation number $r$ passing through $x$.
Furthermore, this geodesic is a minimal axis.
\item
If $\alpha_1 \not= \alpha_2$ the set $S_x^r$ is a disjoint union of two antipodal connected sets bounded by
two minimal geodesics, where each geodesic is forward and backward asymptotic to the pair of
neighboring axes $\alpha_1, \alpha_2$. The interior of $S_x^r$
contains no minimal rays.\\
Moreover, for
all points $y$ in the interior of the strip $S$, the interior of the set $S_y^r$ is non-empty.
\end{enumerate}
\end{enumerate}
\end{theorem}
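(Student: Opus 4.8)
The plan is to deduce every assertion from a single fact: for each $r\in\Br\cup\{\infty\}$ and each $v\in\partial S_x^r$ the bi-infinite geodesic $c_v$ is minimal. To prove this I would combine Proposition~\ref{4} with Lemma~\ref{3}. By Proposition~\ref{4} the family $\{\partial S_x^r\}$ is flow invariant, so $\dot c_v(t)\in\partial S_{c_v(t)}^r$ for every $t\in\Br$, and Lemma~\ref{3} applied at the point $c_v(t)$ shows that the forward ray $c_v|_{[t,\infty)}$ is minimal. Given any $a<b$, taking $t=a$ exhibits $c_v|_{[a,b]}$ as a subsegment of the minimal ray $c_v|_{[a,\infty)}$, hence minimizing; as $a,b$ are arbitrary, $c_v$ is minimal. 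This is assertion~(1). Assertion~(2) then follows at once: for irrational $r$ Proposition~\ref{4} already gives $S_x^r=\{\pm v\}=\partial S_x^r$, so the unique geodesic through $x$ with rotation number $r$ is $c_v$, which is minimal.

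For rational $r$ I would first fix the shape of $S_x^r$. Since $\delta_x$ is continuous and, by Lemma~\ref{2}, its lift $\tilde\delta_x$ is monotone of degree one, the set $S_x^r$ is the preimage of the two antipodal directions of rotation number $r$ and therefore splits as a disjoint union $C\sqcup(-C)$ of two antipodal closed arcs, recovering Corollary~\ref{c1}; moreover $\partial S_x^r$ is exactly the set of endpoints of these arcs. In the degenerate case $\alpha_1=\alpha_2=:\alpha$ of~(3a) the point $x$ lies on the axis $\alpha$, whose velocity I denote by $u$. If some $v\in\partial S_x^r$ had $v\neq\pm u$, then by Lemma~\ref{3} the minimal ray $c_v|_{[0,\infty)}$ is asymptotic to $\alpha$; thus $c_v|_{[0,\infty)}$ and $\alpha|_{[0,\infty)}=c_u|_{[0,\infty)}$ are two asymptotic minimal rays from $x$ with $\dot c_v(0)\neq\pm u$, and Theorem~\ref{B}(1) forces $c_v|_{[-\varepsilon,\infty)}$ to be non-minimal, contradicting~(1). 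Hence $\partial S_x^r=\{\pm u\}$, and since $S_x^r$ has two antipodal connected components but only two boundary points, each component is a single point; so $S_x^r=\{\pm u\}$ and $c_u=\alpha$ is the unique geodesic, a minimal axis.

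For~(3b), with $\alpha_1\neq\alpha_2$, I would first show the arc $C$ is nondegenerate: Theorem~\ref{B}(3) provides four distinct minimal rays from $x$, in particular two with distinct directions that are forward asymptotic to $\alpha_1$ and to $\alpha_2$; both directions lie in $C$, so $C$ cannot be a single point, and $\mathrm{int}\,S_x^r\neq\emptyset$. As Theorem~\ref{B}(3) applies to every point in the interior of the strip, the same argument gives $\mathrm{int}\,S_y^r\neq\emptyset$ for all such $y$. Writing $e_1,e_2$ for the endpoints of $C$, the geodesics $c_{e_1},c_{e_2}$ are minimal by~(1) and, by Lemma~\ref{3}, each forward ray is asymptotic to $\alpha_1$ or $\alpha_2$; if both were asymptotic to the same axis they would be asymptotic minimal rays from $x$ with distinct directions, so Theorem~\ref{B}(1) would contradict the minimality of $c_{e_1}$. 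Hence $c_{e_1}$ and $c_{e_2}$ are forward asymptotic to different axes. The same comparison rules out minimal rays in the interior of $C$: a minimal ray $c_w|_{[0,\infty)}$ with $w$ interior would be asymptotic to some $\alpha_i$ (Theorem~\ref{B}(2)), hence asymptotic to the boundary ray $c_{e_i}$ of the same axis with a different direction, and Theorem~\ref{B}(1) would again contradict the minimality of $c_{e_i}$.

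The one step that genuinely requires $h_{top}(g)=0$, and which I expect to be the main obstacle, is to show that each bounding geodesic is asymptotic to \emph{both} axes, i.e. to exclude a minimal line biasymptotic to a single axis (say $\alpha_1$ forward and backward). Theorem~\ref{B} by itself permits this, so it must be ruled out by the curve-shortening technique underlying the Fundamental Lemma. Such a line $\gamma$ would be disjoint from $\alpha_1$, since a minimal geodesic asymptotic to $\alpha_1$ at both ends cannot cross it, and would shadow $\alpha_1$ inside the strip. Using that $\alpha_1$ is an axis of a translation $\tau$, I would assemble $\gamma$, $\alpha_1$ and a suitable translate $\eta\alpha_1$ into the $V$-shaped crossing configuration of the Fundamental Lemma exactly as in the proof of Lemma~\ref{1}, producing broken geodesic segments that meet $\eta\alpha_1$ and $\eta^{-1}\alpha_1$ and therefore forcing positive topological entropy, a contradiction. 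This forces the heteroclinic pairing, so $c_{e_1}$ is of type $A(\alpha_1,\alpha_2)$ and $c_{e_2}$ of type $A(\alpha_2,\alpha_1)$, completing~(3b) and the theorem.
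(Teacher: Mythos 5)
Your parts (1), (2), (3a) and the first three claims of (3b) are correct and follow essentially the paper's own route: flow invariance (Proposition~\ref{4}) combined with Lemma~\ref{3} gives bi-infinite minimality of the boundary geodesics; Proposition~\ref{4} settles the irrational case; the arc/boundary-point count together with Theorem~\ref{B}(1) gives (3a); Theorem~\ref{B}(1)--(2) give that the two bounding geodesics run forward to different axes and that the interior of $S_x^r$ contains no minimal rays; and Corollary~\ref{c1} with Theorem~\ref{B}(3) gives the nonempty interior of $S_y^r$.

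The genuine gap is exactly the step you single out as the crux, and your proposed repair fails. The Fundamental Lemma requires two geodesic segments with \emph{both endpoints on} the base axis $\alpha$, interiors disjoint from $\alpha$, one crossing $\eta\alpha$ and the other crossing $\eta^{-1}\alpha$. A minimal geodesic $\gamma$ biasymptotic to $\alpha_1$ is, as you yourself establish, disjoint from $\alpha_1$, so it has no subsegment with endpoints on $\alpha_1$; moreover it stays on one side of $\alpha_1$ inside the strip, whereas any pair $\eta\alpha_1$, $\eta^{-1}\alpha_1$ of translates disjoint from $\alpha_1$ has its two members on opposite sides of $\alpha_1$ (no translate of $\alpha_1$ can lie strictly inside the strip, since $\alpha_1,\alpha_2$ are neighboring), so $\gamma$ cannot meet both. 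Choosing a transversal base axis does not help either: every curve in the configuration ($\gamma$, the axes, all their translates) is \emph{minimal}, and minimal geodesics pairwise intersect at most once, so the double-crossing, V-shaped patterns of Lemma~\ref{1} can never be assembled from them --- the Fundamental Lemma detects precisely the crossing behavior that minimality forbids. The deeper point is that this step needs no entropy hypothesis at all: nonexistence of minimal geodesics homoclinic to a single axis is a classical fact, valid for every metric, and is part of Bangert's structure theory of minimal geodesics which the paper recalls in Section~2 and in Theorem~\ref{B} and tacitly invokes in its proof of (3b). If you want it from the paper's toolkit: a minimal $\gamma$ biasymptotic to $\alpha_1$ and its translate $\tau\gamma$ under the translation generating $\alpha_1$ can neither be disjoint (the region between $\gamma$ and $\alpha_1$ is pinched at both ends, yet by a nesting argument it would have to contain translates $\tau^n\gamma$ whose points keep a fixed positive distance from $\alpha_1$ arbitrarily far out) nor cross: at a crossing point one would obtain two forward-asymptotic minimal rays with distinct initial directions, both of which extend backward as minimal geodesics, contradicting Theorem~\ref{B}(1). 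With that classical lemma in place of your entropy argument, the heteroclinic pairing and hence the theorem follow as you describe.
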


\begin{proof}
\begin{enumerate}
\item
Consider $v \in \partial S_x^{r}$ and $t>0$.  Proposition \ref{4} implies
$\phi^{-t}(v) \in \partial S_{c_{v}(-t)}^r$ and hence by Lemma \ref{3}
the geodesic $c_v :[-t, \infty) \to \Br^2$ is minimal. Since $t$ is arbitrary
$c_v: \Br \to \Br^2$ is minimal.
\item
For $r \in \Br \setminus \Bq$ consider $v, w \in \partial S_x^r$ and $v \not= -w$.
Using (1) the geodesics $c_v$ and $c_w$ are minimal which
by Theorem \ref{B} implies $v=w$.
\item
Consider $r \in \Bq \cup \{\infty \}$ and $v \in \partial S_x^r$. Then $c_v$ is asymptotic to one in the
pair of neighboring minimal axes $\alpha_1, \alpha_2$ with rotation number $r$.
\begin{enumerate}
\item
If $\alpha_1=  \alpha_2$ , the geodesic $c_v$
coincides with the axis up to orientation and $\partial S_x^r = \{ \pm v \}$.
\item
If $\alpha_1 \not=  \alpha_2$ and $c_v $ is asymptotic to say $\alpha_1$ there is
$w \in \partial S_x^r$ with $w \not= \{\pm v\}$ such that $c_w $ is asymptotic to $\alpha_2$.\\
Let  $c_u$ be  another minimal geodesic ray  with $u \in S_x^r\setminus \partial S_x^r$.
Then by Theorem~\ref{B} (2) $c_u$ is asymptotic to one of the minimal neighboring axes
and hence to a geodesic $c_v $ with  $v \in \partial S_x^r$.
Since $c_v $ is minimal this contradicts Theorem \ref{B} (1).
The last assertion follows from Corollary \ref{c1} and Theorem \ref{B} (3).
\end{enumerate}
\end{enumerate}
\end{proof}

\noindent Now we are able to prove our main theorem stated in the introduction:

\begin{proof}[Proof of the Theorem]
By Theorem~\ref{5} for $r \in \Br \setminus \Bq$ the universal covering is foliated
by minimal geodesics with rotation number $r$. Since the foliation is unique it is preserved by translation elements.
Hence, geodesics with irrational rotation numbers do not intersect their translates.
For $r \in \Bq \cup \{\infty\}$ we distinguish the following two cases:
\begin{enumerate}
\item
either the universal covering is foliated by minimal axes with rotation number $r$,
\item
or between a pair of neighboring axes $\alpha_1, \alpha_2$ with rotation number $r$ there exist two
different foliations by minimal geodesics asymptotic to $\alpha_1$ and $\alpha_2$ with rotation number $r$.
\end{enumerate}
If for all rational rotation numbers the foliation is unique it consists of minimal axes.
Then all geodesics are minimal and there exist no conjugate points. Then, by Hopf's
Theorem ~\cite{H} the Riemannian two-torus is flat.
\end{proof}
\begin{remo}
Surprisingly the only known metrics  on $T^2 = \mathbb{R}^2/ \mathbb{Z}^2$ with zero topological entropy are
the so called Liouville metrics which are of the form
$$
ds^2 = (f(x) + g(y)) (dx^2 + dy^2)
$$
where $f,g: \mathbb{R} \to \mathbb{R}$ are strictly positive smooth $1$-periodic functions.
The geodesic flow of such a metric is integrable (see e.g. \cite{BF}).  If $f,g$ are not constant the only non unique minimal geodesic foliations correspond to the directions
$(1,0)$ and $(0,1)$. If only $f$ or $g$ is constant there is exactly one such direction. If both
are constant no such direction exists since the metric is flat (see \cite{Sch}).
\end{remo}
We like to close with some important and intriguing open problems.
\begin{prob} $\;$
\begin{enumerate}
\item
Is the converse of our main theorem true, i.e. does the following hold?  Given a Riemannian metric on $T^2$ such that
for each rotation number $r \in \mathbb{R} \cup \{\infty \}$ there exists a foliation of lifted minimal geodesics
on $\mathbb{R}^2$ with rotation number $r$. Does this imply that the topological entropy is zero?
\item
Has each metric with zero topological entropy on $T^2$ only finitely many rational directions with non unique
minimal geodesic foliations? Is there at least one rational direction where the foliation is unique?
\item
Is in case of zero topological entropy the geodesic flow of a Riemannian metric on $T^2$  integrable in the sense
of Liouville and Arnold?
\item
Finally we like to add the following longstanding open question which was raised by Kozlov, Fomenko, Sinai and others
(see e.g. \cite{BF}).
Do there exist besides the Liouville metric other metrics on $T^2$ with integrable geodesic flows?
It has been conjectured by Fomenko and Kozlov (see \cite{BF}) that the answer is no.
\end{enumerate}
\end{prob}

\end{document}